\definecolor{darkblue}{rgb}{0.2,0.2,0.71}
\tikzstyle{vertex}=[circle, draw, inner sep=0pt, minimum size=3pt]
\newcommand{\vertex}{\node[vertex]}
\newtheorem{thm}{Theorem}[section]
 \newtheorem{cor}[thm]{Corollary}
 \newtheorem{lem}[thm]{Lemma}
 \newtheorem{prop}[thm]{Proposition}
 \newtheorem{example}[thm]{Example}
\newtheorem{proposition}[thm]{Proposition}
\theoremstyle{theorem}
\newtheorem{theorem}{Theorem}
\theoremstyle{definition}
\newtheorem*{definition}{Definition}
\newtheorem*{remark}{Remark}
  \newcommand{\s}{{S}}
 \newcommand{\A}{\mathcal{A}}
  \newcommand{\h}{\mathfrak{h}}
 \newcommand{\g}{\mathfrak{g}}
 \newcommand{\ad}{\textrm{ad}}
\newcommand{\bil}[2]{{\langle #1 | #2\rangle}}
 \newcommand{\ew}{\mathcal{W}}
 \newcommand{\eh}{\mathcal{H}}
\def\bt{\begin{theorem}}
\def\et{\end{theorem}}
\def\bc{\begin{corollary}}
\def\ec{\end{corollary}}
\def\bx{\begin{example}}
\def\ex{\end{example}}
\def\bxr{\begin{exercise}\small}
\def\exr{\end{exercise}}
\def\bl{\begin{lemma}}
\def\el{\end{lemma}}
\def\bd{\begin{definition}}
\def\ed{\end{definition}}
\def\bp{\begin{proposition}}
\def\ep{\end{proposition}}
\def\br{\begin{remark}}
\def\er{\end{remark}}
\def\be{\begin{equation}}
\def\ee{\end{equation}}
\def\&{\hspace{-15pt}&}
\def\bea{\begin{eqnarray}}
\def\eea{\end{eqnarray}}
\begin{document}

\title{ Weights of Semiregular   Nilpotents  in Simple Lie Algebras of  D Type}
\markright{Submission}
\author{Yassir Dinar}

\maketitle
\begin{abstract}
We compute the  weights of the adjoint action of  semiregular $sl_2$-triples in simple Lie algebras of type $D_n$ using mathematical induction.
\end{abstract}


{\small \noindent{\bf Mathematics Subject Classification (2010) } Primary 17B20; Secondary 17B22, 	17B10,17B08}

{\small \noindent{\bf Keywords:} Simple Lie algebra, Root system,  Nilpotent orbits}

\section{Introduction}
Let  $\g$ be a complex simple Lie algebra of rank $r$ with Lie bracket denoted  $[.,.]$. Consider the adjoint map $\ad:\g\to End(\g)$ defined by $\ad_g(x):=[g,x]$ for all $g,x\in \g$. Then  $g$ is called nilpotent if $\ad_g$ is nilpotent in $End(\g)$. We fix a nilpotent  element  $e$  in $\g$. Then,  using Jacobson-Morozov theorem, we also  fix a semisimple element  $h$ and a nilpotent element $f$ in $\g$ such that the set $\A=\{e,h,f\}$ forms a $sl_2$-triple with relations
\begin{equation}\label{sl2:relation} [h,e]=2 e,\quad [h,f]=-2f,\quad [e,f]= h.
\end{equation}

It follows from representation theory of $sl_2$ algebra  that the restriction of the adjoint map to  the subalgebra generated by $\A$ leads  to the decomposition of $\g$ into irreducible $\A$-submodules
\begin{equation}\label{decompo}
\g=\bigoplus_{i=1}^{n} V^i.
\end{equation}
Here   $n$ equals $\dim (\ker \ad_e)\geq r$ and  $\dim V^i=2\eta_i+1$ where $\eta_i$ is an integer or a half integer. Then the eigenvalues with multiplicities of the action of $\ad_h$ on $\g$ is the multiset formed by  the union of the arithmetic series $2 \eta_i-2j$ where $j=0,1,\cdots, 2\eta_i$, $i=1,\ldots,n$. We define\textbf{ the weight partition $\ew$} associated to $e$ as the {\bf multiset}
\begin{equation}
\ew:=[\eta_1,\eta_2,\ldots,\eta_r].
\end{equation}
We  assume throughout the article that the numbers in $\ew$ are given in non increasing order, i.e. $\eta_i\geq \eta_j$ when $i<j$. It is known that taking any other nilpotent element in the  orbit $\mathcal{O}_e$ of $e$ under the adjoint group action gives equal weight partition.  In this article we calculate  $\ew$ for certain infinite families  of nilpotent orbits.

A nilpotent element $g\in \g$  is called regular if and only if $\dim (\ker ad_e)=r$. Any simple Lie algebra possesses regular nilpotent elements and all of them lie in one orbit. Let us assume  that $e$ is  a regular nilpotent element.  In this case, Kostant proved that the numbers given in $\ew$ is just  what is known as the exponents of the Lie algebra $\g$ \cite{kostBetti}. Note that exponents of $\g$ was calculated  by using various involving  tricks  from   invariant theory to   analysing  properties of  Coxeter elements in the underline Weyl group \cite{JHum}.   The connection to invariant theory is govern  by  Chevalley's theorem which states that    the invariant ring  of the adjoint group action on $\g$ is a polynomial ring with $r$ homogeneous  generators of degrees $\eta_i+1$.
While, it is known that the eigenvalues of  a Coxeter element  acting   on a Cartan subalgebra are  $\omega^{\eta_i}$ where  $\omega$ is $(\kappa+1)$th root of unity, $\kappa=\max \ew$.  However, there is rather an  elementary   procedure  to calculate the weight partition of $e$  which depends on  defining and partitioning the underline set of  positive roots using   $h$ (\cite{COLMC},section 4.4). In  next section, we will explain how we can use the same procedure to obtain the weight partitions for  a class of nilpotent orbits  known as  distinguished nilpotent orbits. This procedure will be used in this article to get our  main results.

In the case $e$ is a subregular nilpotent element, i.e. $\dim (\ker ad_e)=r+2$. Also, subregular nilpotent elements exist in all simple Lie algebras and belong to the same orbit. The partition $\ew$ was obtained by Slodowy (\cite{sldwy1}, section 7.4). The structure of the numbers in $\ew$ was essential for him to  prove that the restriction of the adjoint quotient map to $e+ \ker \ad \,f$ is a semiuniversal deformation of simple hypersurface singularity of the same type (\cite{sldwy1}, section 8.3).

For arbitrary nilpotent element $e$, $\ew$ appears in the theory of $W$-algebras. It is known that preforming Drinfed-Sokolov reduction associated to the nilpotent element $e$, we obtain a classical $W$-algebra which consists of Virasoro density and $n-1$ conformal primary fields. The weights of these primary fields are $\eta_i+1$, $i=2,\ldots,n$ \cite{feher}.
Our interest in $\ew$ was initiated by the finding that   the degrees of a  Frobenius manifold that can be obtained from  Drinfeld-Sokolov reduction associated to $e$ can be read from the set $\ew$ \cite{mypaper1}, \cite{mypaper5}.

From what is mentioned above we see that weight partitions   are essential for understanding and analysing the  geometric and algebraic structure obtained using nilpotent elements. However, even nilpotent elements in simple Lie algebra have been classifies through their orbits, there is  a gap in the literature for the value of $\ew$ in case $e$ not  regular or subregular.  In this article,  we calculate the weight  partition  of  all semiregular   nilpotent elements  in Lie algebra of type $D_r$. The methods used in this article  are elementary, using only mathematical induction and general theory of Lie algebra. However, these methods  can be used for  other distinguished nilpotent elements.

For the case  $\g$ is a Lie algebra of type $D_r$, semiregular nilpotent orbits are  denoted  $D_r(a_k)$ where   $k$ is an integer and  $0\leq k<\frac{r}{ 2}$ (more details are given below). Note that  a regular nilpotent element is of type $D_r(a_0)$ and subregular one is of type $D_r(a_1)$.    In this article we will prove the following
\begin{thm}
The weight partition  $\ew_{k,r}$ for a nilpotent element of type $D_r(a_k)$, $0\leq k<\lfloor \frac{r}{2}\rfloor$, is the multiset formed as the sum   of the multisets  $ [1,3,5,\cdots,2r-2k-3]$, $[1,3,5,\cdots,2k-1] $ and  $[r-1,r-2,r-3,r-4,\cdots,r-2k,r-2k-1] $.
\end{thm}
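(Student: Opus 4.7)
My plan is to apply the procedure outlined in Section~2 (the extension of the Collingwood--McGovern root-by-root method to distinguished nilpotent orbits) to the specific distinguished orbit $D_r(a_k)$. Concretely, one must (i)~identify the weighted Dynkin diagram, and hence a characteristic semisimple element $h$, for $D_r(a_k)$; (ii)~decompose $\g$ into $\ad_h$-eigenspaces $\g=\bigoplus_j \g_{2j}$; (iii)~read off $\ew_{k,r}$ from the sequence $d_j:=\dim \g_{2j}$ using the fact that, because $D_r(a_k)$ is distinguished, the number of irreducible $\A$-summands of highest weight exactly $j$ is $d_j-d_{j+1}$.

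The first concrete step would be to write down the weighted Dynkin diagram of $D_r(a_k)$ in the standard realisation of $D_r$ (with positive roots $e_i-e_j$ and $e_i+e_j$, $i<j$), and then compute, for each positive root $\alpha$, the integer $\alpha(h)$. The combinatorial heart of the argument is the claim that the positive roots split into three natural families whose $h$-weights account for the three arithmetic progressions of the theorem: one family contributing the long odd progression $[1,3,\ldots,2r-2k-3]$, another contributing the short odd progression $[1,3,\ldots,2k-1]$, and a third consisting of roots supported near the forked end of the diagram, contributing the consecutive block $[r-1,r-2,\ldots,r-2k-1]$. Once this is established, assembling the three multisets into $\ew_{k,r}$ reduces to verifying the counts $d_j-d_{j+1}$.

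To avoid a direct (and long) root-by-root count I would proceed by induction on $r$ with $k$ fixed. A natural base case is $r=2k+2$, the smallest rank admitting the orbit in the allowed range; alternatively one can anchor at $k=0$, where $\ew_{0,r}$ is forced to be the exponents of $D_r$ by Kostant's theorem, which agrees with the formula. The inductive step should use the inclusion $D_{r-2}\subset D_r$: the complement consists of roots involving the two new basis vectors $e_{r-1}$ and $e_r$, and these roots should account for precisely the two new entries $\{2r-2k-5,2r-2k-3\}$ of the first progression together with the upward shift by $2$ of the third progression, which is exactly the difference $\ew_{k,r}-\ew_{k,r-2}$ read off from the statement. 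This is the place where the assumption $k<\lfloor r/2\rfloor$ enters, keeping the third progression within the positive-weight range and the orbit inside the distinguished regime.

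The main obstacle will be the bookkeeping of the $h$-weights. One must show that the three claimed progressions are produced \emph{disjointly} from the root enumeration, so no root is double-counted, and that at overlap values (for instance when $r-2k-1$ coincides with an odd integer already appearing in one of the odd progressions) the multiplicities accumulate correctly as a multiset sum rather than as a set union. A secondary difficulty is that the procedure of Section~2 outputs the $d_j$'s and one must invert this data into Jordan-block lengths; translating the three arithmetic progressions back into the function $j\mapsto d_j$ is where the inductive comparison with $D_{r-2}(a_k)$ will need to be checked level by level, and this is where I would expect most of the technical effort to sit.
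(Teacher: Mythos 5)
Your overall frame coincides with the paper's: compute the weighted Dynkin diagram of $D_r(a_k)$, obtain the level dimensions $d_j=\dim\g_{2j}$, and recover $\ew_{k,r}$ as the transpose of the height partition (your $d_j-d_{j+1}$ is exactly the multiplicity of $j$ in $\ew$, equivalent to the paper's formula \eqref{dualprn}). The paper likewise proceeds by induction on the rank with $k$ fixed, anchors $k=0$ at the regular case, and uses $r=2k+2$ as the starting point for $k>0$. However, as written your proposal is a plan rather than a proof, and it has three concrete gaps.

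First, your induction steps by $2$ via $D_{r-2}\subset D_r$, starting from the single base case $r=2k+2$. Since $2k+2$ is even, this reaches only even ranks; every odd rank $r=2k+3,2k+5,\ldots$ is never covered. You would need a second base case at $r=2k+3$ (or, as the paper does, step by $1$, adding one simple root $\alpha_{r+1}$ of weight $2$ at a time, which also makes the bookkeeping of new roots much lighter: Lemma \ref{basic1} shows the new positive roots are $\alpha_{r+1}$, the roots $\beta+\alpha_{r+1}$ with $\alpha_r\mid\beta$, and two roots above the old highest root). Second, the base case $r=2k+2$ is itself an infinite family indexed by $k$; establishing $\ew_{k,2k+2}$ requires its own argument (the paper devotes Section 5 to it, with an induction on $k$ in which the two added simple roots have weights $0$ and $2$, a genuinely different step from the fixed-$k$ induction). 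Your proposal does not say how this base case would be verified. Third, the combinatorial core --- that the roots involving $e_{r-1},e_r$ contribute exactly the multiset difference $\ew_{k,r}-\ew_{k,r-2}$, i.e., the two new top entries of the long odd progression plus the upward shift by $2$ of the consecutive block --- is asserted but deferred (``this is where I would expect most of the technical effort to sit''). This is where the actual content lies: the paper has to track, level by level, how many roots of each height are divisible by the last simple root, and the answer changes shape at $r=4k+3$, forcing a case split ($r\le 4k+3$ versus $r>4k+3$, Tables \ref{htprn1} and \ref{htprn2}) that your plan does not anticipate. Until these three points are filled in, the argument is an outline of the right strategy rather than a proof.
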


We illustrate by example  the sum and difference  of two multisets as this operations are needed in the article. Simply,  the sum of the two multisets $[4,2,2,2,1,1]$ and $[5,4,2,2]$ is $[5,4,4,2,2,2,2,2,1,1]$ and their difference $[4,2,2,2,1,1]- [5,4,2,2]$ equals $[2,1,1]$.

All information needed to verify the   findings of this study are included within the article.

\section{The height  partition of  distinguished nilpotents}

In this section we recall the classification of nilpotent elements and the definition of distinguished nilpotent elements. Then  we will use the same logic given in (\cite{COLMC}, section 4.4) for the case of regular nilpotent elements  to derive  a procedure   to find  weight partitions  for arbitrary   distinguished nilpotent elements. We keep the notations given in the introduction. All facts about nilpotent elements given in this section is obtained from the excellent book  \cite{COLMC}.

Let us consider the  Dynkin grading associated to $e$
 \begin{equation}
\g=\oplus_{i\in \mathrm{Z}} \g_i;~~~\g_i:=\{g\in\g: [h, g]=i g\};~~[\g_i,\g_j]\subseteq \g_{i+j}.
\end{equation}
Then a nilpotent element $e$ is called distinguished if and  only if $\dim \g_0=\dim \g_2$.

Let us fix a Cartan subalgebra $\h\subset \g_0$  containing $h$ and denote $\Phi$ the associated root system. We  define a set of positive roots by $\Phi^+=\{\alpha\in \Phi:\alpha(h)\geq 0\}$.  Let $\Delta=\{\alpha_1,\alpha_2,\cdots, \alpha_r\}$ be the set of simple roots in $\Phi^+$. The weighted Dynkin diagram of $e$ is the Dynkin diagram of $\g$ where we assign the value $\alpha_i(h)$ to the node of $\alpha_i$. By a nilpotent orbit $\mathcal{O}_e$ of $e$, we mean the orbit of $e$ under the adjoint group action. It turns out that the weighted Dynkin diagram of $e$ completely characterizes the nilpotent orbit  $\mathcal O_e$, i.e. $e'\in \mathcal{O}_e$ if and only if $e'$ has the same weighted Dynkin diagram  as  $e$ (see \cite{COLMC} for details). This leads to the classification of nilpotent elements by their orbits. In this classification, distinguished nilpotent orbits are always denoted $Z_r(a_i)$ where $Z$ is the type of  $\g$ and $i$ is the number of vertices of weight 0 in the corresponding weighted  Dynkin diagram.  If there is another orbit of the same number $i$ of 0's then the notation $Z_r(b_i)$ is used. If $e$ is distinguished then $\alpha_i(h)\in \{0,2\}$ for every $i$. Thus, the numbers in $\ew$ are all integers and  $\g_i=0$ if $i$ is odd \cite{COLMC}.

{\bf We assume  for the rest of this article that $e$ is a distinguished nilpotent element}. We set
\begin{equation}
    R:=\dim \g^+,~~ \g^+:=\{v\in \g: ad_h(v)=\lambda v \text{ and } \lambda>0\}.
\end{equation}
Then using representation theory of $sl_2$ algebras, we can consider $\ew$  as a  partition of the natural number $R$, i.e. $R=\sum_{i=1}^{n}\eta_i$. On the other hand, if we define the \textbf{height partition} $\eh$ associated of  $e$ as the multiset  \begin{equation}
 \eh:=[\dim{\g_2},\dim{\g_4},\ldots].
  \end{equation}
Then it turns out that  $\eh$ is another partition of $R$, i.e. $\sum_{i>0} \dim \g_i=\sum_{i=1}^{n}\eta_i$. Note that the numbers for both $\eh$ and $\ew$ are given in a non increasing order. Then, using again representation theory of $sl_2$ algebra, it is easy to observe that $\ew$ is the {\bf transpose (or conjugate) partition} of $\eh$. This means that given the partition $\eh$, then $\ew$ is found by the formula
\begin{equation}\label{dualprn}
\eta_j=|\{i|\dim \g_i\geq j\}|.
\end{equation}

We illustrate another way to define  $\eh$ which depends on  studying the action of $h$ on $\Phi^+$. Consider  Cartan decomposition
\begin{equation}\label{cartan:decom}
\g=\h\oplus\bigoplus_{\beta\in \Phi}\g_\beta
\end{equation}
and define  the set $\rho_i$ of roots of height $i$ associated to $e$ by
\begin{equation}\label{hights}
\rho_i=\{\beta\in \Phi:\beta(h)=2i\}=\{  \sum m_j \alpha_j\in \Phi: \sum_{\alpha_j(h)=2} m_j=i\}.
\end{equation}
Then  $\g_i=\oplus_{\{\alpha\in \rho_i\}}\g_\alpha$ and   $\dim\g_{i}=|\rho_i|$. Hence,  $\eh$ can be redefined as
\begin{equation}
\eh=[ |\rho_1|,|\rho_2|,\ldots].
\end{equation}
  This also illustrates that  finding the height and  weight partitions depends only on the semisimple element $h$   which is uniquely specified by  the weighted Dynkin diagram.

\section{Lie algebra of type $D_r$ and semiregular nilpotents}

 {\bf We  assume for  the reminder of this article that $\g$ is a Lie algebra of type $D_r$}.   We denote the simple roots of $D_r$ by  $\alpha_1,\alpha_2,...,\alpha_r$. We  order them as illustrated on  the  Dynkin diagram of $\g$ given in figure \ref{fig:WDD}. Our order for the simple roots are not the standard in the literature (\cite{JHum},\cite{car},\cite{bour}) but we found it very convenient for using mathematical induction.

We denote  the set of positive  roots of $D_r$ by $\Phi_{r}^+ $. Then, it consists of the following  $r^2-r$ roots \cite{bour}: In addition to $ \alpha_1$, we have
\begin{eqnarray*}
\alpha_{i+1}+\cdots+\alpha_{j}& ;& i<j\\
\alpha_1+\alpha_2+(2\alpha_3+.....+2\alpha_{i})+\alpha_{i+1}+....+\alpha_j&;&~3\leq i< j \leq r\\\
\alpha_1+\alpha_2+\cdots+\alpha_{j}&;&~ 3\leq j\\
 \alpha_1+\alpha_3+\cdots+\alpha_{j}&;&~ 3 \leq j
 \end{eqnarray*}
The root $\alpha_1+\alpha_2+(2\alpha_3+.....+2\alpha_{r-1})+\alpha_{r}$ is called the highest root of $\Phi_r^+$ and will be denoted $\gamma_r$.

 Recall that a subalgebra $\s$ of $\g$ is called regular if it has the form  $\s=\h\oplus\bigoplus_{\beta\in \Theta}\g_\beta$ for some  $\Theta \subseteq \Phi_r$. Then a nilpotent element is  called semiregular if its orbit has no intersection with any proper regular subalgebra of $\g$.
  The semiregular nilpotent orbits in Lie algebra of type $D_r$ are the distinguished nilpotent orbits  $D_r(a_k)$ where $0\leq k< \lfloor \frac{r}{2}\rfloor$ \cite{arbour}, here $\lfloor\cdot \rfloor$ is the floor function. When  $\g$ is the special orthogonal Lie algebra $so_{2r}$, $D_r(a_k)$ corresponds to the partition $[2r-2k-1,2k+1]$ of $2r$ \cite{arbour}. The weighted Dynkin diagram of $D_r(a_k)$ has all weights equal 2 except those   $\alpha_s$ for  $s=2m+1, ~0<m\leq k$ (see figure \ref{fig:WDD}) . The orbit $D(a_0)$ is the regular nilpotent orbit while $D_r(a_1)$ is the subregular nilpotent orbit.

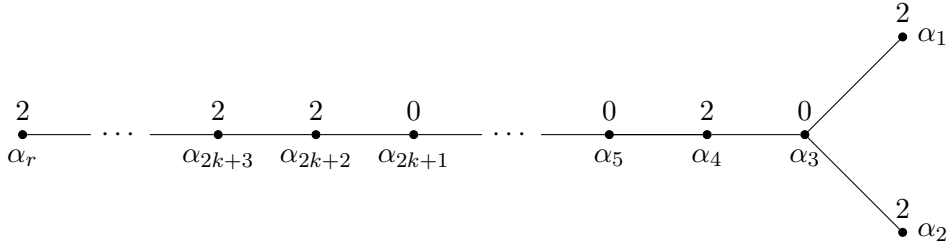
\begin{figure}
\centering \begin{tikzpicture}[x=1.3cm, y=1.3cm]
	\vertex[fill] (a1) at (2,1) [label=right:$\alpha_1$,label=above:2] {};
	\vertex[fill] (a2) at (2,-1) [label=right:$\alpha_2$,label=above:2] {};
	\vertex[fill] (a3) at (1,0) [label=below:$\alpha_3$,label=above:0] {};
	\vertex[fill] (a4) at (0,0) [label=below:$\alpha_4$,label=above:2] {};
	\vertex[fill] (a5) at (-1,0) [label=below:$\alpha_5$,label=above:0] {};
  \node        (q_dots) at (-2,0) {$\cdots$};

\vertex[fill] (ak) at (-3,0) [label=below:$\alpha_{2k+1}$,label=above:0] {};
\vertex[fill] (ak1) at (-4,0) [label=below:$\alpha_{2k+2}$,label=above:2] {};
\vertex[fill] (ak2) at (-5,0) [label=below:$\alpha_{2k+3}$,label=above:2] {};
 \node        (q_dots1) at (-6,0) {$\cdots$};
 \vertex[fill] (an) at (-7,0) [label=below:$\alpha_{r}$,label=above:2] {};
	\path
		
(a1) edge (a3)
(a2) edge  (a3)
(a3) edge  (a4)
(a4) edge  (a5)
(a4) edge (q_dots)
(q_dots) edge (ak)
(ak) edge (ak1)
(ak1) edge (ak2)
(ak2) edge (q_dots1)
(q_dots1) edge (an)


	;
\end{tikzpicture}

\caption{Weighted Dynkin Diagram of $D_r(a_k)$} \label{fig:WDD}
\end{figure}

   In this paper we will always {\bf consider  $\Phi_{r}^+$  as a subset of the positive roots $\Phi_{r+1}^+$ of $D_{r+1}$}. For $\alpha,\beta \in \Phi_r^+$, {\bf we say that $\alpha$ divides $\beta$, in notations $\alpha|\beta$}, if there exist $\gamma \in \Phi_{r}^+ \cup \{0\}$ such that $\gamma+\alpha=\beta$. Then, we have the following

 \begin{lem}\label{basic1}
 The set $\Phi_{r+1}^+$ is  the  union of the sets $\Phi_{r}^+$, $\{\alpha_{r+1}\}$, $\{\beta+\alpha_{r+1}: \beta\in \Phi_{r}^+ \, \wedge\, \alpha_{r}|\beta\}$ and $\{\gamma_r+\alpha_{r+1},\gamma_r+\alpha_{r}+\alpha_{r+1}\}$, where $\gamma_r$ is the highest root in $\Phi_{r}^+$.
 \end{lem}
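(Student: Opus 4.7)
The plan is to prove both inclusions by inspection of the explicit enumeration of positive roots of $\Phi_{r+1}^+$ given right before the lemma. I first observe that in every positive root of $D_{r+1}$ the coefficient of $\alpha_{r+1}$ is either $0$ or $1$; this is visible from each of the four listed families (equivalently, $\alpha_{r+1}$ appears with coefficient one in the highest root $\gamma_{r+1}$). So I partition $\Phi_{r+1}^+$ according to whether this coefficient is $0$ or $1$.

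The case of coefficient $0$ collapses immediately: restricting to $j\leq r$ in each of the four families defining $\Phi_{r+1}^+$ reproduces verbatim the four families defining $\Phi_{r}^+$, so these roots are exactly $\Phi_{r}^+$.

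For coefficient $1$, the four subfamilies obtained by fixing $j=r+1$ are the roots $\alpha_{i+1}+\cdots+\alpha_{r+1}$ for $1\leq i\leq r$, the roots $\alpha_1+\alpha_2+2\alpha_3+\cdots+2\alpha_i+\alpha_{i+1}+\cdots+\alpha_{r+1}$ for $3\leq i\leq r$, the root $\alpha_1+\alpha_2+\cdots+\alpha_{r+1}$, and the root $\alpha_1+\alpha_3+\cdots+\alpha_{r+1}$. For each such root $\alpha$ I set $\beta:=\alpha-\alpha_{r+1}$ and check whether $\beta\in\Phi_r^+$ with $\alpha_r|\beta$, that is whether $\beta-\alpha_r\in\Phi_r^+\cup\{0\}$. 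The first subfamily with $i=r$ contributes $\alpha=\alpha_{r+1}$ alone. In the remaining generic cases one reads off directly that $\beta$ lies in the corresponding family of $\Phi_r^+$ and that $\beta-\alpha_r$ is either $0$ or another root of the same family with $j=r-1$. The genuinely exceptional cases are in the second subfamily with $i=r-1$, giving $\gamma_r+\alpha_{r+1}$, and with $i=r$, giving $\gamma_{r+1}=\gamma_r+\alpha_r+\alpha_{r+1}$: here $\beta$ would have coefficient $2$ at $\alpha_{r-1}$ or $\alpha_r$ respectively, so $\beta\notin\Phi_r^+$, and these two contribute the singletons in the lemma.

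The reverse inclusion is then immediate: the same listing shows that $\beta+\alpha_{r+1}$ with $\beta\in\Phi_r^+$ and $\alpha_r|\beta$ is always in $\Phi_{r+1}^+$, as is $\alpha_{r+1}$ itself, and the two roots $\gamma_r+\alpha_{r+1}$ and $\gamma_{r+1}$ appear explicitly in the enumeration. The main obstacle is really only a bookkeeping point: confirming that the two exceptional roots fall outside the $\beta+\alpha_{r+1}$ pattern, which boils down to the observations that no root of $\Phi_r^+$ has coefficient $2$ at $\alpha_r$ and that $\gamma_r-\alpha_r$ is not a root of $\Phi_r$.
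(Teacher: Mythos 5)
Your proof is correct, but it reaches the conclusion by a genuinely different route than the paper. The paper argues structurally: since in the Dynkin diagram of $D_{r+1}$ the new node $\alpha_{r+1}$ is linked only to $\alpha_r$, the pairing $\langle\beta|\alpha_{r+1}\rangle$ sees only the $\alpha_r$-coefficient of $\beta$, and the simply-laced criterion that $\beta+\alpha_j$ is a root exactly when $\langle\beta|\alpha_j\rangle<0$ then identifies which $\beta+\alpha_{r+1}$ are roots; the highest root $\gamma_r$ is singled out as the unique positive root containing $\alpha_r$ with $\alpha_r\nmid\gamma_r$, producing the two extra roots. You instead partition $\Phi_{r+1}^+$ by the coefficient of $\alpha_{r+1}$ (which is $0$ or $1$) and match the explicit four-family enumeration term by term. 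Your version is more of a bookkeeping verification, but it buys something the paper's proof leaves implicit: it establishes completeness, i.e.\ that every root of $\Phi_{r+1}^+$ with $\alpha_{r+1}$-coefficient $1$ actually arises from the listed sets, rather than only checking that the listed sums are roots. One imprecision to fix: in your treatment of the exceptional case $i=r-1$ you assert that $\beta=\gamma_r\notin\Phi_r^+$ because of the coefficient $2$ at $\alpha_{r-1}$; in fact $\gamma_r$ \emph{is} an element of $\Phi_r^+$ --- what fails there is the divisibility condition, since $\gamma_r-\alpha_r$ is not a root. Your closing sentence gives exactly this correct reason (together with the correct reason for the $i=r$ case, namely that no root of $\Phi_r^+$ has coefficient $2$ at $\alpha_r$), so the argument stands, but the intermediate sentence should be reworded to say that $\beta\in\Phi_r^+$ fails for $i=r$ while $\alpha_r\mid\beta$ fails for $i=r-1$.
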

 \begin{proof}
 Recall that  the Dynkin diagram encodes the values of a bilinear form $\bil . .$ on the set of roots where  $\bil {\alpha_i}{\alpha_i}=2$ and $\bil {\alpha_i}{\alpha_j}=-1$ if and only if there is a link between $\alpha_i$ and $\alpha_j$.  Moreover, for a root $\beta$, we have  $\beta+\alpha_j$ is a root if and only if $\bil \beta {\alpha_j}<0$. We observe that adding  $\alpha_{r+1}$ to the Dynkin diagram of $D_r$ then $\bil {\alpha_{r+1}}{\alpha_i}$ is nonzero only when $i=r$ and in this case the value is $-1$. Hence, for $\beta\in \Phi_r^+$,  if  $\alpha_r|\beta $ then $\bil \beta {\alpha_{r+1}}=-1$ and $\beta+\alpha_{r+1}$ is a root.  Note that $\alpha_r\nmid \gamma_r$ but $\gamma_r$ leads to two roots  $\gamma_r+\alpha_{r+1}$ and $\gamma_r+\alpha_r+\alpha_{r+1}$.
 \end{proof}

 Following  the definitions given  in the last section (see equation  \eqref{hights}), for a nilpotent element of type $D_r(a_k)$, let  $\rho_{i,k,r}$ denotes  the set of roots of height $i$, $\eh_{k,r}$ denotes  the height partition and $\ew_{k,r}$ denotes  the weight partition. Thus
\begin{equation}\label{hights:def}
\rho_{i,k,r}=\{\sum m_j \alpha_j\in \Phi_r^+:\sum_{j\neq 2t+1;0<t\leq k}^r m_j=i\}, ~\eh_{k,r}=[ |\rho_{1,k,r}|,|\rho_{2,k,r}|,\ldots].
 \end{equation}

   From lemma \ref{basic1} we get the following very useful result.

 \begin{lem}\label{basic} Consider the set $\Phi_{r}^+$ as a subset of $\Phi_{r+1}^+$. Let  $m_r=\max \{i: \rho_{i,k,r}\neq \varnothing\} $  and define the set
 $\xi_{i,k,r}:=\{ \beta\in\rho_{i-1,k,r}:\alpha_r \text{ divides } \beta\}
$.
 Then
 \begin{enumerate}
     \item $\rho_{1,k,r+1}=\rho_{1,k,r}\cup  \{\alpha_{r+1}\}$.
     \item $\rho_{m_r+1,k,r+1}=\{\gamma_r+\alpha_{r+1}\}$ and $\rho_{m_r+2,k,r+1}=\{\gamma_r+\alpha_{r}+\alpha_{r+1}\}$.
     \item $\rho_{i,k,r}\subset \rho_{i,k,r+1}$ and
     $|\rho_{i,k,r+1}|= |\rho_{i,k,r}|+ |\xi_{i,k,r}|, ~1<i\leq m_r$.
 \end{enumerate}
 \end{lem}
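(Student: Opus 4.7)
The plan is to apply Lemma \ref{basic1} and sort the four pieces of $\Phi_{r+1}^+$ by height. Explicitly, the decomposition reads
\[
\Phi_{r+1}^+ = \Phi_r^+\,\sqcup\,\{\alpha_{r+1}\}\,\sqcup\,\mathcal{B}\,\sqcup\,\{\gamma_r+\alpha_{r+1},\;\gamma_r+\alpha_r+\alpha_{r+1}\},
\]
with $\mathcal{B}:=\{\beta+\alpha_{r+1}:\beta\in\Phi_r^+,\,\alpha_r\mid\beta\}$. The preliminary observation that drives everything is that, for the semiregular orbit $D_{r+1}(a_k)$, the condition $k<\lfloor(r+1)/2\rfloor$ gives $r+1>2k+1$, so $\alpha_{r+1}$ is not among the weight-$0$ simple roots $\alpha_3,\alpha_5,\ldots,\alpha_{2k+1}$. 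Hence $\alpha_{r+1}$ has weight $2$ in the weighted Dynkin diagram, and by \eqref{hights:def} the height function is additive when $\alpha_{r+1}$ is adjoined (the same holds for $\alpha_r$).

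For part 1, I would classify which of the four pieces contributes a root of height $1$. Any element $\beta+\alpha_{r+1}$ of $\mathcal{B}$ has height at least $2$ because $\beta$ already contains the weight-$2$ simple root $\alpha_r$; the two extremal roots have heights $m_r+1$ and $m_r+2$, both exceeding $1$; the piece $\Phi_r^+$ contributes exactly $\rho_{1,k,r}$, since the weights of $\alpha_1,\ldots,\alpha_r$ are the same inside $D_r(a_k)$ as inside $D_{r+1}(a_k)$; and the singleton $\{\alpha_{r+1}\}$ contributes $\{\alpha_{r+1}\}$ itself.

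For part 3, in the range $1<i\leq m_r$, the singleton $\{\alpha_{r+1}\}$ (height $1$) and the two extremal roots (heights strictly greater than $m_r$) contribute nothing. The piece $\Phi_r^+$ contributes $\rho_{i,k,r}$ verbatim, which gives the inclusion $\rho_{i,k,r}\subset\rho_{i,k,r+1}$. The contribution from $\mathcal{B}$ consists of the roots $\beta+\alpha_{r+1}$ with $\beta\in\Phi_r^+$, $\alpha_r\mid\beta$, and $\beta$ of height $i-1$; the map $\beta\mapsto\beta+\alpha_{r+1}$ is a bijection from $\xi_{i,k,r}$ onto this contribution, yielding the cardinality identity.

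Part 2 is the delicate case. For $\rho_{m_r+2,k,r+1}$ the root $\gamma_r+\alpha_r+\alpha_{r+1}$ visibly lies in the set, and no other piece can reach height $m_r+2$: $\Phi_r^+$ tops out at $m_r$, $\mathcal{B}$ at $m_r+1$, $\{\alpha_{r+1}\}$ has height $1$, and $\gamma_r+\alpha_{r+1}$ has height $m_r+1$. For $\rho_{m_r+1,k,r+1}$ the root $\gamma_r+\alpha_{r+1}$ qualifies, and the task is to rule out further contributions from $\mathcal{B}$, i.e.\ to show that no $\beta\in\rho_{m_r,k,r}$ is divisible by $\alpha_r$. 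This uniqueness step is the main obstacle. The starting point, already observed in the proof of Lemma \ref{basic1}, is that $\gamma_r-\alpha_r=\alpha_1+\alpha_2+2\alpha_3+\cdots+2\alpha_{r-1}$ is not a positive root, so $\alpha_r\nmid\gamma_r$; the remaining verification reduces to an inspection of the explicit list of $\Phi_r^+$ given earlier to confirm that any other root of height $m_r$ likewise fails to have $\alpha_r$ as a summand whose removal leaves a positive root.
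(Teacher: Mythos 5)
Your handling of parts 1 and 3 is essentially the paper's own argument: decompose $\Phi_{r+1}^+$ via Lemma \ref{basic1}, note that $\alpha_{r+1}$ carries weight $2$, split $\rho_{i,k,r+1}$ into the roots not divisible by $\alpha_{r+1}$ (which recover $\rho_{i,k,r}$) and those of the form $\beta+\alpha_{r+1}$, and identify the latter with $\xi_{i,k,r}$ via $\beta\mapsto\beta+\alpha_{r+1}$. Those two parts are correct and need no further comment.

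The gap is in part 2, precisely at the step you single out as ``the main obstacle'' and then defer to an inspection of the root list. That inspection does not confirm the uniqueness you need; it refutes it in one family of cases. Take $r=2k+2$ with $k\geq 1$, so $m_r=2k+1$. By Proposition \ref{regnilp}, the set $\rho_{m_r,k,r}$ contains, besides the highest root, a second root which \emph{is} divisible by $\alpha_r$: for $D_4(a_1)$ it is $\alpha_1+\alpha_2+\alpha_3+\alpha_4$, and removing $\alpha_4$ leaves the positive root $\alpha_1+\alpha_2+\alpha_3$. By your own analysis of the piece $\mathcal{B}$, this root contributes an element of height $m_r+1$ to $\Phi_{r+1}^+$, so $\rho_{m_r+1,k,r+1}$ contains two roots, not one; the paper's own base-case computation in Section 6 records $|\rho_{2k+2,k,2k+3}|=2$, which agrees with this count and not with part 2 as literally stated. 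The statement that survives is $\rho_{m_r+1,k,r+1}=\{\gamma_r+\alpha_{r+1}\}\cup\{\beta+\alpha_{r+1}:\beta\in\rho_{m_r,k,r},\ \alpha_r\mid\beta\}$, i.e.\ part 3's counting formula pushed to $i=m_r+1$ together with the extra root produced by $\gamma_r$; the singleton claim is valid exactly when no root of height $m_r$ is divisible by $\alpha_r$, which does hold for $k=0$ and for $r>2k+2$ (where $\rho_{2r-2k-3,k,r}$ is just the highest root) but fails at $r=2k+2$. Your argument for $\rho_{m_r+2,k,r+1}$ being a singleton is fine. For comparison, the paper's proof of part 2 is a one-sentence assertion that only exhibits $\gamma_r+\alpha_{r+1}$ as a member and never addresses uniqueness, so you correctly located where the real work lies --- but the deferred verification, once carried out, contradicts the claim rather than completing the proof, and a correct write-up must either restrict or restate part 2 accordingly.
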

\begin{proof}
 Note that the weighted Dynkin diagram of $D_{r+1}(a_k)$  is obtained from that of $D_r(a_k)$ by adding the root $\alpha_{r+1}$ with weight 2. Thus from lemma \ref{basic1}, the first consequent   is clear and  the second  follows from the fact that  the highest root $\gamma_r$ of $\Phi_r^+$ belongs to $\rho_{m_r,k,r}$ and also $\alpha_r$ has weight 2.  Obviously,  $\rho_{i,k,r+1}$ is a disjoint union of $A=\{\beta\in \rho_{i,k,r+1}: \alpha_{r+1} \nmid \beta\}$ and $B=\{\beta\in \rho_{i,k,r+1}: \alpha_{r+1} | \beta\}$. But then $A=\rho_{i,k,r}$ and $B=\{\beta+\alpha_{r+1}: \beta \in \rho_{i,k,r} \text{ and } \alpha_r| \beta\}=\xi_{i,k,r}$. This proves the  third consequent.
\end{proof}


 \section{Regular nilpotent element in $D_r$}

 We assume in this section that the nilpotent element $e$ is regular (of type $D_r(a_0)$). Then, as we mentioned in the introduction, the  weight partition $\ew_{0,r}$ is already known. However, we give a complete proof of how to find $\ew_{0,r}$ using the procedure  outlined in section 2.

 Observe that, in this case, $\alpha_i(h)=2$ for every simple root $\alpha_i$, and so $\rho_{i,0,r}=\{\sum m_j\alpha_j~:\sum m_j=i\}$

 \begin{prop}\label{reqnilpel} The set $\rho_{2r-3,0,r}$ contains only the highest root. All other sets  $\rho_{i,0,r},\, i<2r-3$ have exactly one root  divisible by $\alpha_r$, except  $\rho_{r-1,0,r}$ has  two roots divisible by $\alpha_r$. The partition $\eh_{0,r}$  is a difference of two multisets
 \[ \eh_{0,r}=[r,r,r-1,r-1,...,1,1] - [r,\lfloor {r\over 2} \rfloor,\lceil {r\over 2}\rceil].\]
here $\lfloor\cdot \rfloor$ and $\lceil \cdot \rceil$ are the floor and ceiling  functions, respectively.
 \end{prop}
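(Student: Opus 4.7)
The plan is to prove all three assertions of the proposition simultaneously by induction on $r$, with Lemma \ref{basic} as the engine. The three claims are naturally coupled, since the $\alpha_r$-divisibility count at rank $r$ is precisely what the third conclusion of Lemma \ref{basic} needs in order to propagate the height partition from rank $r$ to rank $r+1$. The base case $r=3$ will be verified directly by listing the six positive roots of $D_3$ and computing heights and $\alpha_3$-divisibility by hand.

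For the inductive step I first locate the new top of the height range. A direct computation from the explicit form of the highest root gives $\gamma_{r+1}=\gamma_r+\alpha_r+\alpha_{r+1}$, so combined with the inductive value $m_r=2r-3$, the second conclusion of Lemma \ref{basic} identifies $\rho_{2(r+1)-3,0,r+1}$ with $\{\gamma_{r+1}\}$. Moreover $\gamma_{r+1}$ is not divisible by $\alpha_{r+1}$: one checks that $\gamma_{r+1}-\alpha_{r+1}=\gamma_r+\alpha_r$ has coefficient $2$ at $\alpha_r$ and $0$ at $\alpha_{r+1}$, which matches no entry in the explicit list of positive roots of $D_{r+1}$. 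I then count, at each height, the roots of $\Phi_{r+1}^+$ divisible by $\alpha_{r+1}$. By Lemma \ref{basic1} the roots of $\Phi_{r+1}^+$ containing $\alpha_{r+1}$ are $\alpha_{r+1}$ itself, $\gamma_r+\alpha_{r+1}$, $\gamma_{r+1}$, and the roots $\beta+\alpha_{r+1}$ with $\beta\in\Phi_r^+$ and $\alpha_r\mid\beta$; the first three sit at heights $1$, $2r-2$, $2r-1$ respectively, and all of them are divisible by $\alpha_{r+1}$ except $\gamma_{r+1}$. Counting divisibility at the intermediate heights therefore reduces, via the inductive hypothesis, to counting $\alpha_r$-divisible roots in $\Phi_r^+$ one step lower: exactly one at every height, with the sole exception of height $r=(r+1)-1$, where there are two. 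This yields the second claim at rank $r+1$.

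To assemble the height partition I apply Lemma \ref{basic} with $|\xi_{i,0,r}|=1$ for $i\neq r$ and $|\xi_{r,0,r}|=2$, obtaining $|\rho_{i,0,r+1}|=|\rho_{i,0,r}|+1$ for every $2\leq i\leq 2r-3$ except $|\rho_{r,0,r+1}|=|\rho_{r,0,r}|+2$, together with $|\rho_{1,0,r+1}|=r+1$ and $|\rho_{2r-2,0,r+1}|=|\rho_{2r-1,0,r+1}|=1$. Substituting the inductive formula for $\eh_{0,r}$ and simplifying to $[r+1,r+1,r,r,\ldots,1,1]-[r+1,\lfloor(r+1)/2\rfloor,\lceil(r+1)/2\rceil]$ is the most delicate step, since it requires a short parity-dependent bookkeeping to track how the two subtracted entries $\lfloor r/2\rfloor,\lceil r/2\rceil$ reorganize as $r$ passes to $r+1$ of opposite parity. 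This multiset manipulation is the main obstacle I anticipate.
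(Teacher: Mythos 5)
Your proposal is correct and follows essentially the same route as the paper: induction on $r$ driven by Lemma \ref{basic}, carrying along the count of $\alpha_r$-divisible roots at each height (one everywhere except two at height $r-1$, with the top height holding only the highest root) and then converting the resulting increments into the stated multiset difference. The only deviations are cosmetic --- you start the induction at $r=3$ rather than the paper's $r=4$, and you defer the final parity-dependent multiset simplification (which the paper carries out explicitly in displays \eqref{niseven} and \eqref{nisodd}) rather than executing it, but that step is routine and your identification of the increments ($+1$ at all heights except $+2$ at height $r$, plus the two new singleton top levels) is exactly what is needed.
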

 \begin{proof}
 The proof is by induction on $r$. The proposition is true when $r=4$ since
  \begin{eqnarray*}
  \rho_{1,0,4}&=&\{\alpha_1,\alpha_2,\alpha_3,\alpha_4\}\\
   \rho_{2,0,4}&=&\{\alpha_1+\alpha_3,\alpha_2+\alpha_3,\alpha_3+\alpha_4\}\\
    \rho_{3,0,4}&=&\{\alpha_1+\alpha_2+\alpha_3,\alpha_2+\alpha_3+\alpha_4,\alpha_1+\alpha_3+\alpha_4\}\\
     \rho_{4,0,4}&=&\{\alpha_1+\alpha_2+\alpha_3+\alpha_4\}\\
      \rho_{5,0,4}&=&\{\alpha_1+\alpha_2+2\alpha_3+\alpha_4\}
      \end{eqnarray*}
 and  $\eh_{0,4}=[4,3,3,1,1]$. We assume the statement is  true for $r$. Then from lemma \ref{basic} we have  $|\rho_{i,0,r+1}|=|\rho_{i,0,r}|+1$ when $1\leq i\leq 2r-3$ and $i\neq r$, while  $|\rho_{r,0,r+1}|=|\rho_{r,0,r}|+2$. Hence $\rho_{r,0,r+1}$ contains two roots divisible by $\alpha_{r+1}$. The highest root in $\rho_{2r-3,0,r}$  yields one root  in $\rho_{2r-2,0,r+1}$ and the  highest  root  of $\Phi_{r+1}^+$ in $\rho_{2(r+1)-3,0,r+1}$. This proves the first part of the proposition. Hence, when $r$ is even,  $\eh_{0,r+1}$ is obtained by adding 1 and 2 to the numbers in  $\eh_{0,r}$ subject to
 \begin{equation}\label{niseven}
 [\overbracket{\underbrace{r,r-1,r-1, \cdots,{r\over 2}+1,{r\over 2}+1}_\text{+1}}^{|\rho_{1,0,r}|,|\rho_{2,0,r}|,\cdots,|\rho_{r-1,0,r}|},\overbracket{\underbrace{{r\over 2}-1}_\text{+2}}^{|\rho_{r,0,r}|}
    ,{\underbrace{\overbracket{{r\over 2}-1,\cdots,2,2,1,1}^{|\rho_{r+1,0,r}|,\ldots,|\rho_{2r-3,0,r}|},0,0}_\text{+1}}]
\end{equation}
  One can check that after addition and  substituting $m=r+1$, we get the proposed partition \eqref{nisodd} for $\eh_{0,m}$ in case $m$ is odd. Similarly, $\eh_{0,m+1}$ is obtained from $\eh_{0,m}$ by the following addition
\begin{equation}\label{nisodd}
\begin{split}
 [\overbracket{\underbrace{m,m-1,m-1,\cdots,{m+3\over 2},{m+3\over 2},{m+1\over 2}}_\text{+1}}^{|\rho_{1,0,m}|,|\rho_{2,0,m }|,\ldots,|\rho_{m-1,0,m}|}  ,\underbrace{{m-1\over 2}}_\text{+2}
     \underbrace{,\overbracket{{m-3\over 2},{m-3\over 2},\cdots,2,2,1,1}^{|\rho_{m+1,0,m}|,\ldots,|\rho_{2m-3,0,m}|},0,0}_\text{+1}]
\end{split}
\end{equation}
 which after addition and  substituting $r=m+1$ gives the required partition  \eqref{niseven} for $\eh_{0,r}$.
 \end{proof}

\begin{cor}\label{wis}
$\ew_{0,k}$ is the sum  of the multisets  $[2r-3,2r-5,\ldots, 3,1]$ and $[r-1]$.
\end{cor}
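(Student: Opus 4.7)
The corollary is essentially a direct consequence of Proposition \ref{reqnilpel} together with the conjugation relation between $\ew_{0,r}$ and $\eh_{0,r}$ recorded around equation \eqref{dualprn}. The plan is therefore to compute the transpose of $\eh_{0,r}$ explicitly and match the result against the claimed multiset.

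First I would rewrite the partition $\eh_{0,r}$ given in Proposition \ref{reqnilpel} as its multiplicity profile: the value $r$ has multiplicity one; every value $j\in\{1,\dots,r-1\}$ has multiplicity two, except that $\lfloor r/2\rfloor$ and $\lceil r/2\rceil$ each lose one occurrence. For $r$ even this means $r/2$ has multiplicity zero, while for $r$ odd the two consecutive values $(r-1)/2$ and $(r+1)/2$ each have multiplicity one. I would then apply the standard identity $\eta_j-\eta_{j+1}=(\text{multiplicity of }j\text{ in }\eh_{0,r})$, starting from $\eta_r=1$ and working downward. Away from the middle every consecutive difference equals two, producing the pure odd staircase $1,3,5,\ldots$. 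At the middle one encounters either a single zero jump (for $r$ even) or two single jumps (for $r$ odd), and in either case the net effect is to repeat one value in the sequence $\eta_j$ exactly once, and direct inspection shows that the repeated value is $r-1$ in both parities.

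Since the staircase $1,3,5,\ldots,2r-3$ has $r-1$ entries, inserting one extra copy of $r-1$ yields a multiset with $r$ entries, matching the expected cardinality $n=r$ of $\ew_{0,r}$, and the outcome equals $[2r-3,2r-5,\ldots,3,1]\cup[r-1]$ as asserted. The only non-routine step will be keeping the two parity cases straight in the bookkeeping, and I would verify the base computations against $r=4$ (where $\eh_{0,4}=[4,3,3,1,1]$ should conjugate to $[5,3,3,1]$) and $r=5$ (where $\eh_{0,5}=[5,4,4,3,2,1,1]$ should conjugate to $[7,5,4,3,1]$) to guard against off-by-one errors at the boundary of the ``gap'' around $r/2$.
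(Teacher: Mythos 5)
Your proposal is correct and follows essentially the same route as the paper: both start from Proposition \ref{reqnilpel}, conjugate the height partition $\eh_{0,r}$ via \eqref{dualprn}, treat the two parities of $r$ separately around the gap at $\lfloor r/2\rfloor,\lceil r/2\rceil$, and observe that the result is the odd staircase $[2r-3,\ldots,3,1]$ with one extra copy of $r-1$. Your use of the difference identity $\eta_j-\eta_{j+1}=\mathrm{mult}(j)$ is just a repackaging of the paper's direct count of $|\{i:|\rho_{i,0,r}|\geq j\}|$, so no substantive difference.
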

\begin{proof}
 Assume  $r$ is even. Then the dual partition  (using the formula in \eqref{dualprn}) is obtained from looking at the indices  above the  numbers in \eqref{niseven}. For example we have $2r-3$ numbers in $\eh_{0,r}$ greater that or equal 1, $2r-5$ numbers greater than or equal $2$ and so on, until we reach  $3$ number greater than or equal $r-1$ and 1 number greater than or equal 1. Thus we find that
\begin{equation}\label{wiseven}
\ew_{0,r}=[2r-3,2r-5,\cdots,r+1,r-1;r-1,r-3,\cdots,3,1]
\end{equation}
Similarly,  in case $r$ is odd we get
\begin{equation}\label{wisodd}
\ew_{0,r}=[2r-3,2r-5,\cdots,r;r-1;r-2,r-4,\cdots,3,1]
\end{equation}
Observe that the formulas above are  independent of $r$ being odd or even since  they can be formed as the sum of the multisets $[2r-3,2r-5,\ldots, 3,1]$ and $[r-1]$.
\end{proof}
 \section{Nilpotent elements of type $D_{2k+2}(a_k)$}

In this section we assume the nilpotent element $e$ is of type $D_{2k+2}(a_k)$ where $k>0$. We will  calculate $\ew_{k,2k+2}$. From the definition, the set $\rho_{i,k,2k+2}$ of roots of height $i$  is given by
$
 \rho_{i,k,2k+2}=\{\sum m_j\alpha_j~:1+\sum_{j\in 2\mathbb{Z}} m_j=i\}.
$

\begin{prop}\label{regnilp}
The set $ \rho_{2k+1,k,2k+2}$ consists of  exactly the highest root and other root divisible by $\alpha_{2k+2}$. Each other set $ \rho_{i,k,2k+2};\, 1<i<2k+1$ has exactly two roots divisible by $\alpha_{2k+2}$. The partition $\eh_{k,2k+2}$ is given by
\begin{eqnarray}
\eh_{k,2k+2} &=&[4k-j-2\lfloor {j\over 2}\rfloor+3: 1\leq j\leq 2k+1]\\\nonumber
&=&[4k+2,4k-1,4k-2,4k-5,...,7,6,3,2]
\end{eqnarray}
\end{prop}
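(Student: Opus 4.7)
The plan is to induct on $k$. The base case $k=1$ (the subregular orbit $D_4(a_1)$) can be verified by direct enumeration of the twelve positive roots of $D_4$, which yields $\eh_{1,4}=[6,3,2]$ as required.

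For the inductive step, assume the proposition holds for $k-1$, that is, for the orbit $D_{2k}(a_{k-1})$. Crucially, the weights assigned to $\alpha_1,\ldots,\alpha_{2k}$ by the weighted Dynkin diagram of $D_{2k+2}(a_k)$ agree with those assigned by $D_{2k}(a_{k-1})$, so heights of roots of $\Phi_{2k}^+$ are preserved under the inclusion $\Phi_{2k}^+\hookrightarrow\Phi_{2k+2}^+$. I apply lemma \ref{basic1} first with $r=2k$ and then with $r=2k+1$ to build $\Phi_{2k+2}^+$. In the first step $\alpha_{2k+1}$ has weight $0$, so the new roots $\alpha_{2k+1}$, $\beta'+\alpha_{2k+1}$ (for $\beta'\in\Phi_{2k}^+$ with $\alpha_{2k}\mid\beta'$), $\gamma_{2k}+\alpha_{2k+1}$, and $\gamma_{2k+1}$ acquire heights $0$, the same as $\beta'$, $2k-1$, and $2k$ respectively. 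In the second step $\alpha_{2k+2}$ has weight $2$, so the new roots $\alpha_{2k+2}$, $\beta+\alpha_{2k+2}$ (for $\beta\in\Phi_{2k+1}^+$ with $\alpha_{2k+1}\mid\beta$), $\gamma_{2k+1}+\alpha_{2k+2}$, and $\gamma_{2k+2}$ acquire heights $1$, one more than $\beta$, $2k+1$, and $2k+1$. This already establishes the first assertion: height $2k+1$ is attained exactly by $\gamma_{2k+2}$ (the highest root, not divisible by $\alpha_{2k+2}$ because $\gamma_{2k+2}-\alpha_{2k+2}$ does not fit any of the explicit forms for positive roots of $D_{2k+2}$) and by $\gamma_{2k+1}+\alpha_{2k+2}$ (divisible by $\alpha_{2k+2}$).

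For the divisibility count, I identify the roots of $\Phi_{2k+1}^+$ divisible by $\alpha_{2k+1}$: they are $\alpha_{2k+1}$, the $\beta'+\alpha_{2k+1}$ above, and $\gamma_{2k}+\alpha_{2k+1}$; the root $\gamma_{2k+1}$ is excluded because $\gamma_{2k}+\alpha_{2k}$ is not a positive root of $D_{2k+1}$. The inductive hypothesis supplies exactly two roots of $\rho_{j,k-1,2k}$ divisible by $\alpha_{2k}$ for $1<j<2k-1$, and exactly one such root at $j=2k-1$ (accompanying the highest root $\gamma_{2k}$); combined with the single contribution at $j=2k-1$ from $\gamma_{2k}+\alpha_{2k+1}$, this gives exactly two roots of $\Phi_{2k+1}^+$ divisible by $\alpha_{2k+1}$ at every height $1\le j\le 2k-1$. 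The case $j=1$ needs a direct check: the only qualifying roots are $\alpha_{2k}$ and $\alpha_{2k-1}+\alpha_{2k}$, since any positive root supported entirely on the weight-zero simple roots $\alpha_3,\alpha_5,\ldots,\alpha_{2k-1}$ must be a single simple root (those indices are pairwise non-adjacent in the Dynkin diagram). Together with $\alpha_{2k+2}$ itself at height $1$, this yields exactly two roots of $\rho_{i,k,2k+2}$ divisible by $\alpha_{2k+2}$ for each $1<i<2k+1$.

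Finally, the formula for $\eh_{k,2k+2}$ follows by tallying four contributions at each level $i$: the $|\rho_{i,k-1,2k}|$ inherited from $\Phi_{2k}^+$, a $+2$ from the $\beta'+\alpha_{2k+1}$ roots, a $+2$ from the $\beta+\alpha_{2k+2}$ roots, plus boundary corrections at $i=1$, $2k-1$, $2k$, $2k+1$ from the various $\gamma$-roots. For intermediate $2\le i\le 2k-2$ this gives $|\rho_{i,k,2k+2}|=|\rho_{i,k-1,2k}|+4$, which matches the proposed closed form $4k+3-i-2\lfloor i/2\rfloor$ since it exceeds $4(k-1)+3-i-2\lfloor i/2\rfloor$ by exactly $4$. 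The main obstacle will be the careful case analysis at the boundary heights $1$, $2k-1$, $2k$, and $2k+1$, where several $\gamma$-roots and boundary terms from lemma \ref{basic1} land on the same level and the general count must be adjusted accordingly.
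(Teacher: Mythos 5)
Your proposal is correct and follows essentially the same route as the paper: induction on $k$ with base case $D_4(a_1)$, using the stability of the weighted Dynkin diagram under adding the two new simple roots (one of weight $0$, one of weight $2$) and tracking at each height the two roots divisible by the last simple root to get the $+4$ per level plus boundary adjustments. The only cosmetic difference is that you factor the inductive step through the intermediate system $\Phi_{2k+1}^+$ by applying Lemma \ref{basic1} twice, whereas the paper lists the four new roots at each level in a single step; the resulting roots and counts are identical.
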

\begin{proof}
The proof is by induction on $k$. The result is true when  $k=1$ since
  \begin{eqnarray*}
  \rho_{1,1,4}&=&\{\alpha_1,\alpha_2,\alpha_4,\alpha_1+\alpha_3,\alpha_2+\alpha_3,\alpha_3+\alpha_4\}\\
      \rho_{2,1,4}&=&\{\alpha_1+\alpha_2+\alpha_3,\alpha_2+\alpha_3+\alpha_4,\alpha_1+\alpha_3+\alpha_4\}\\
     \rho_{3,1,4}&=&\{\alpha_1+\alpha_2+\alpha_3+\alpha_4,\alpha_1+\alpha_2+2\alpha_3+\alpha_4\}\\
      \end{eqnarray*}
and $\eh_{1,4}=[6,3,2]$. We  assume the result is true for $k>0$. For $k+1$, we  have to analyse  the consequence of adding the two roots $\alpha_{2k+3}$ and $\alpha_{2k+4}$ to the set $\Phi^+_{2k+2}$  where $\alpha_{2k+3}$ has weight zero and  $\alpha_{2k+4}$ has weight 2.  Note that
\begin{equation}
 \rho_{1,k+1,2k+4}=\rho_{1,k,2k+2}\cup \{\alpha_{2k+4},\alpha_{2k+3}+\alpha_{2k+4},\alpha_{2k+2}+\alpha_{2k+3}, \alpha_{2k+1}+\alpha_{2k+2}+\alpha_{2k+3}\}
\end{equation}
and so $|\rho_{1,k+1,2k+4}|=| \rho_{1,2k+2}|+4$. Also, for $1<i\leq 2k+1$, if $\gamma_1,\gamma_2$ are the two roots in $\rho_{i-1,k,2k+2}$ divisible by $\alpha_{2k+2}$ and $\sigma_1,\sigma_2$ be the two roots in $\rho_{i,k,2k+2}$ divisible by $\alpha_{2k+2}$. Then   we have

\begin{equation}
\rho_{i,k+1,2k+4}= \rho_{i,k,2k+2} \cup \{\gamma_1+\alpha_{2k+3}+\alpha_{2k+4},\gamma_2+\alpha_{2k+3}+\alpha_{2k+4}, \sigma_1+\alpha_{2k+3},\sigma_2+\alpha_{2k+3}\}
\end{equation}
so $|\rho_{i,k+1,2k+4}|=| \rho_{i,k,2k+2}|+4$ and $\rho_{i,k+1,2k+4}$ contains two roots divisible by $\alpha_{2k+4}$. Now the two roots in $\rho_{2k+1,k,2k+2}$  are
\begin{eqnarray*}
\omega_1&=& \alpha_1+\alpha_2+2\alpha_3+...+2\alpha_{2k}+ \alpha_{2k+1}+\alpha_{2k+2}\\
\omega_2 &=&\alpha_1+\alpha_2+2\alpha_3+...+2 \alpha_{2k+1}+\alpha_{2k+2}
\end{eqnarray*}
This yields
\begin{equation}
 \rho_{2k+2,k+1,2k+4}=\{\omega_1+\alpha_{2k+3}+\alpha_{2k+4},\omega_2+\alpha_{2k+3}+\alpha_{2k+4},\omega_2+\alpha_{2k+2}+\alpha_{2k+3} \}
\end{equation}
and
\begin{equation}
\rho_{2k+3,k+1,2k+4}=\{\omega_2+\alpha_{2k+2}+\alpha_{2k+3}+\alpha_{2k+4}, \alpha_1+\alpha_2+2\alpha_3+... +2\alpha_{2k+3}+\alpha_{2k+4}\}
\end{equation}
hence, $|\rho_{2k+2,k+1,2k+4}|=3$ and $|\rho_{2k+3,k+1,2k+4}|=2$ and the first part of the proposition is true for $k+1$. We conclude that  the partition $\eh_{k+1,2k+4}$ is obtained from  $\eh_{k,2k+2}$ by adding the numbers 4,3 and 2 subject to

 \begin{equation} [\underbrace{4k+2,4k-1,4k-2,4k-5,\cdots, 6,3,2}_\text{+4},\underbrace{0}_\text{+3},\underbrace{0}_\text{+2}]\end{equation}
 which after addition gives  the proposed    height partition  $\eh_{k+1,2k+4}$, i.e.
 \begin{equation}\label{dk}[4(k+1)+2,4(k+1)-1,4(k+1)-2,4(k+1)-5,...,7,6,3,2]\end{equation}
 Finally, numbers appear in $\eh_{k,2k+2}$ are  the union of  two arithmetic series $\{ 4k-2j+4: j~ \mathrm{is~  odd}\}$ and $
            \{4k-2j+3: j~\mathrm{is ~even}\}$. If $j=2m+1$ is odd, we have
            $4k-2j+4=4k-j-2m+3=4k-j-2\lfloor{ j\over 2}\rfloor+3$. This is  the same  formula we get when  $j=2m$ is even.
\end{proof}

\begin{cor}\label{regnilp1}
$\ew_{k,2 k+2}$ is the sum  of the multisets $[2k+1,2k-1,\ldots,3,1]$, $[2k+1,2k-1,\ldots,3,1]$ and $[2k,2k-1,\ldots,1]$.
\end{cor}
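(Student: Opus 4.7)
The plan is to obtain $\ew_{k,2k+2}$ from the height partition $\eh_{k,2k+2}$ of Proposition \ref{regnilp} by direct application of the transpose rule \eqref{dualprn}, and then to recognize the resulting partition as the claimed sum of three multisets.

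First, I would list the parts of $\eh_{k,2k+2}$ according to the block structure $(4k+2;\,4k-1,4k-2;\,4k-5,4k-6;\,\ldots;\,3,2)$ obtained in the previous proposition. For each $j\in\{1,2,\ldots,4k+2\}$, I would count the parts that are $\ge j$ by splitting into cases according to the residue of $j$ modulo $4$. A short verification gives $\eta_j=2k+1$ for $j\in\{1,2\}$, $\eta_j=2k-2m+1$ for $j\in\{4m,4m+1,4m+2\}$ with $1\le m\le k$, and $\eta_j=2k-2m$ for $j=4m+3$ with $0\le m\le k-1$. In non-increasing order this yields
\[
\ew_{k,2k+2}=[2k+1,2k+1,2k,2k-1,2k-1,2k-1,2k-2,2k-3,2k-3,2k-3,\ldots,2,1,1,1].
\]

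Next, I would compare this multiplicity profile against the proposed sum $[2k+1,2k-1,\ldots,3,1]+[2k+1,2k-1,\ldots,3,1]+[2k,2k-1,\ldots,1]$. Each odd value $2\ell+1$ with $0\le\ell<k$ contributes once to each of the first two summands and once to the third (as an odd entry of $[2k,2k-1,\ldots,1]$), giving three copies in total; the value $2k+1$ appears only in the first two summands, giving two copies; each even value $2\ell$ with $1\le\ell\le k$ appears exactly once, contributed by the third summand. These counts match the displayed partition, which finishes the proof.

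No substantive obstacle arises; the argument is pure bookkeeping. The only delicate point is matching the generic four-block pattern against the endpoint ranges $j\in\{1,2,3\}$ and $j\in\{4k,4k+1,4k+2\}$ in the case analysis, which is handled separately but is entirely routine.
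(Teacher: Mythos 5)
Your proposal is correct and follows essentially the same route as the paper: both take the height partition $\eh_{k,2k+2}=[4k+2,4k-1,4k-2,4k-5,4k-6,\ldots,3,2]$ from Proposition \ref{regnilp} and apply the transpose rule \eqref{dualprn} to read off $\ew_{k,2k+2}=[2k+1,2k+1,2k,2k-1,2k-1,2k-1,\ldots,2,1,1,1]$, then identify this with the stated sum of multisets. Your version is slightly more explicit (the case split on $j$ modulo $4$ and the multiplicity check against the three summands), but the argument is the same.
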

\begin{proof}
The transpose  partition  $\ew_{k,2 k+2}$ of $\eh_{k,2k+2}$ is found by analysing the patterns  of  the numbers given in \eqref{dk} and the indices in the definition of $\rho_{j,k,2k+2}$. Thus we have $2k+1$ numbers in $\eh_{k,2k+2}$ greater than or equal 1 and  greater than or equal 2, there is $2k$ numbers greater than or equal 3, and so on until we reach 1 number in $\eh_{k,2k+2}$ greater than or equal to $4k$, $4k+1$ and $4k+2$. This leads to the required partition
\begin{equation}
\begin{split}
 \ew_{k,2k+2}= & [2k+1,2k+1,2k,2k-1,2k-1,2k-1,  \\
    & 2k-2,2k-3,2k-3,2k-3,\cdots,2,1,1,1]
\end{split}
\end{equation}
\end{proof}

\section{Height partition of $D_{r}(a_k)$}
In this section we fix $0<k<\lfloor{r\over 2}\rfloor$ and we  assume the  nilpotent elements $e$ is of type $D_{r}(a_k)$ where $r>2k+2$. We will find the height partition $\eh_{k,r}$. The definition of    $\rho_{j,k,r}$ is given  in  \eqref{hights:def}.

\begin{prop} The set $\rho_{2r-2k-3,k,r}$  contains only the highest root. For $1\leq j<2r-2k-3$, each set $\rho_{j,k,r}$   has exactly one root divisible by $\alpha_r$ except  when $j=r-2k-1,...,r-1$;~ $\rho_{j,k,r}$   has two roots divisible by $\alpha_r$. The partition $\eh_{k,r}$ is given by the  formula in table \ref{htprn1} when $r\leq 4k+3$ and in table \ref{htprn2} when $r>4k+3$
\end{prop}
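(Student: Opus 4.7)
The plan is to prove all three parts simultaneously by induction on $r$, keeping $k$ fixed, with base case $r = 2k+3$. The induction step is supplied by Lemma~\ref{basic}, and the base case is obtained by applying that lemma once to the structural information provided by Proposition~\ref{regnilp} for the $D_{2k+2}(a_k)$ case: this immediately reads off $\eh_{k,2k+3}$, confirms that the highest root of $\Phi_{2k+3}^+$ is the unique element of $\rho_{2k+3,k,2k+3}$, and pins down the number of roots of $\rho_{j,k,2k+3}$ divisible by $\alpha_{2k+3}$, matching the $r\leq 4k+3$ formula of table~\ref{htprn1}.

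For the inductive step, suppose the proposition holds at $r$ and pass to $r+1$. Part~1 is immediate from Lemma~\ref{basic}(2): the single element $\gamma_r$ of $\rho_{2r-2k-3,k,r}$ yields $\gamma_r+\alpha_r+\alpha_{r+1}$ as the sole element of $\rho_{2(r+1)-2k-3,k,r+1}$, and this is precisely the highest root $\gamma_{r+1}$ of $\Phi_{r+1}^+$. For Part~2, I combine Lemma~\ref{basic1} with the recursion: the roots of $\rho_{j,k,r+1}$ divisible by $\alpha_{r+1}$ consist of $\alpha_{r+1}$ when $j=1$, of the set $\{\beta+\alpha_{r+1}:\beta\in \xi_{j,k,r}\}$ for $1<j\leq m_r$, and of the singletons $\gamma_r+\alpha_{r+1}$ and $\gamma_r+\alpha_r+\alpha_{r+1}$ at the two top heights $m_r+1$ and $m_r+2$. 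The count at height $j$ (for $1<j\leq m_r$) therefore equals $|\xi_{j,k,r}|$, which the inductive hypothesis identifies as $2$ exactly when $j-1\in\{r-2k-1,\ldots,r-1\}$, i.e. $j\in\{r-2k,\ldots,r\}=\{(r+1)-2k-1,\ldots,(r+1)-1\}$, and $1$ otherwise. This is the desired statement at level $r+1$.

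For Part~3, Lemma~\ref{basic}(3) together with Part~2 at level $r$ determines $\eh_{k,r+1}$ from $\eh_{k,r}$ by the following recipe: add $1$ to the entries at positions in $\{1,\ldots,r-2k-2\}\cup\{r,\ldots,2r-2k-4\}$, add $2$ to the entries at positions in $\{r-2k-1,\ldots,r-1\}$, and append two new entries, both equal to $1$, at positions $2r-2k-2$ and $2r-2k-1$. It then remains to verify algebraically that performing this operation on the closed form of table~\ref{htprn1} (resp.\ table~\ref{htprn2}) produces the closed form for $r+1$. This is routine in spirit but must be done in cases depending on whether $r$ and $r+1$ both sit in the same regime ($r\leq 4k+3$ or $r>4k+3$) or straddle the boundary between them.

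The main obstacle will be the transition $r=4k+3\to r+1=4k+4$, where the exceptional $+2$ interval $\{r-2k-1,\ldots,r-1\}=\{2k+2,\ldots,4k+2\}$ meets the portion of the height partition whose formula changes shape between the two tables. Away from this threshold the bookkeeping is parallel to the arithmetic already carried out for the regular case in \eqref{niseven}--\eqref{nisodd} and for $D_{2k+2}(a_k)$ in the display preceding \eqref{dk}: one adds coordinate by coordinate and recognises the resulting arithmetic progressions. At the threshold, the tail of the $r\leq 4k+3$ formula has to be matched with the head of the $r>4k+3$ formula, and a short case-by-case check is required to see that the two descriptions glue together consistently.
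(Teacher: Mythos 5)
Your overall strategy coincides with the paper's: induct on $r$ with $k$ fixed, seed the induction at $r=2k+3$ by applying Lemma \ref{basic} to Proposition \ref{regnilp}, propagate the highest root and the count of $\alpha_r$-divisible roots via Lemma \ref{basic}, and treat the regime change at $r=4k+3\to 4k+4$ as the delicate step (the paper does this bookkeeping in tables \ref{ind1} and \ref{ind2}, plus an explicit computation at $r=4k+4$). Your Parts 1 and 2 are sound and match the paper's reasoning.

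However, your Part 3 recipe has an off-by-one error that contradicts your own Part 2 and would fail if applied literally. Since $|\rho_{j,k,r+1}|=|\rho_{j,k,r}|+|\xi_{j,k,r}|$ and $\xi_{j,k,r}$ counts the $\alpha_r$-divisible roots of $\rho_{j-1,k,r}$, the positions receiving $+2$ are $j\in\{r-2k,\ldots,r\}$ (shifted up by one from the exceptional positions at level $r$), not $\{r-2k-1,\ldots,r-1\}$ as you wrote; correspondingly the $+1$ positions are $\{1,\ldots,r-2k-1\}\cup\{r+1,\ldots,2r-2k-3\}$. In particular every old position $1\leq j\leq 2r-2k-3$ receives a positive increment, whereas your ranges only cover $\{1,\ldots,2r-2k-4\}$ and leave the old top position untouched. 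A totals check exposes the problem: the increments must sum to $|\Phi_{r+1}^+|-|\Phi_r^+|=2r$, and the corrected recipe gives $(r-2k-1)+(r-2k-3)+2(2k+1)+2=2r$, while yours gives $2r-1$, so it cannot reproduce tables \ref{htprn1} and \ref{htprn2}. This is easily repaired, but note also that the actual verification that the (corrected) recipe carries one closed form into the next --- the content of tables \ref{ind1} and \ref{ind2} and the separate computation at $r=4k+4$ in the paper --- is only asserted to be routine in your write-up rather than carried out.
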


\begin{table}
\centering
  \begin{tabular}{|c|c|c|}

  \hline &  {Range} & $|\rho{j,k,r}|$\\
    \hline
   1  &{$j\leq r-2k-2$}&$r+2k-2\lfloor {j\over 2}\rfloor$ \\   \hline
    2  &{$r-2k-2<j \leq 2k+1$}& $2r-j-2\lfloor {j\over 2}\rfloor+1$ \\ \hline
     3 &{$j= 2k+2$} &$2r-4k-4$ \\
    \hline
  6 &{$2k+2<j\leq r-1$}&$2r-k-j-\lfloor {j\over 2}\rfloor-1$ \\   \hline

   7 &{$r-1<j\leq 2r-2k-3$}&$ r-k-\lfloor {j\over 2}\rfloor-1$ \\
    \hline
  \end{tabular}
  \caption{The height partition of $D_{r}(a_k)$, $r\leq 4k+3$ }
  \label{htprn1}
\end{table}

\begin{table}
\centering
  \begin{tabular}{|c|c|c|}

  \hline & {Range} & $|\rho{j,k,r}|$\\
    \hline
   1 &{ $j\leq 2k+1$} &$r+2k-2\lfloor {j\over 2}\rfloor$ \\
 \hline

   4& {$j= 2k+2$}
   &$r-1$ \\ \hline
     5 & {$2k+2<j\leq r-2k-2$}&$r+k-\lfloor {j\over 2}\rfloor$ \\ \hline

  6&  {$r-2k-2<j\leq r-1$}&$2r-k-j-\lfloor {j\over 2}\rfloor-1$ \\ \hline

   7& {$r-1<j\leq 2r-2k-3$}&$ r-k-\lfloor {j\over 2}\rfloor-1$ \\

    \hline
  \end{tabular}
  \caption{The height partition of $D_{r}(a_k)$, $r>4k+3$}
  \label{htprn2}
\end{table}

\begin{table}
\centering
\footnotesize
  \begin{tabular}{|c|c|c|c|c|c|}

  \hline &  {Range} & $|\rho{j,k,r}|$& +& $|\rho{j,k,m}|$ &{Range} \\
    \hline
   1  &{$j\leq r-2k-2$}&$r+2k-2\lfloor {j\over 2}\rfloor$ & 1 & $m+2k-2\lfloor {j\over 2}\rfloor$& $j\leq m-2k-2 $ \\ \hline
    2'  &{$j=r-2k-1 $}& $2r-j-2\lfloor {j\over 2}\rfloor+1$ & 1 & $m+2k-2\lfloor {j\over 2}\rfloor$ &$j\leq m-2k-2 $\\   \hline
    2  &{$r-2k-1<j \leq 2k+1$}& $2r-j-2\lfloor {j\over 2}\rfloor+1$ & 2 & $2m-j-2\lfloor {j\over 2}\rfloor+1$ & {$m-2k-2<j \leq 2k+1$}\\ \hline
     3 &{$j= 2k+2$} &$2r-4k-4$ & 2& $2m-4k-4$ & $j=2k+2$\\
    \hline
  6 &{$2k+2<j\leq r-1$}&$2r-k-j-\lfloor {j\over 2}\rfloor-1$ & 2 & $2m-k-j-\lfloor {j\over 2}\rfloor-1$ & {$2k+2<j\leq m-1$} \\   \hline

   7' &{$j=r$}&$ r-k-\lfloor {j\over 2}\rfloor-1$ & 2 & $2m-k-j-\lfloor {j\over 2}\rfloor-1$ & {$2k+2<j\leq m-1$}\\   \hline
   7 &{$r<j\leq 2r-2k-3$}&$ r-k-\lfloor {j\over 2}\rfloor-1$& 1 & $ m-k-\lfloor {j\over 2}\rfloor-1$& {$m-1<j\leq 2m-2k-5$}\\   \hline
   8 &{$2r-2k-3<j\leq 2r-2k-1$}&$ 0$& 1 & $ m-k-\lfloor {j\over 2}\rfloor-1$& {$2m-2k-5<j\leq 2m-2k-3$}\\
    \hline
  \end{tabular}
  \caption{Induction step for  $D_{r}(a_k)$, $r< 4k+3$ }
  \label{ind1}
\end{table}

\begin{table}
\centering
\footnotesize
  \begin{tabular}{|c|c|c|c|c|c|}

  \hline & {Range} & $|\rho{j,k,r}|$ & + & $|\rho{j,k,m}|$& {Range}\\
    \hline
   1 &{ $j\leq 2k+1$} &$r+2k-2\lfloor {j\over 2}\rfloor$ & 1 & $m+2k-2\lfloor {j\over 2}\rfloor$ & { $j\leq 2k+1$} \\
 \hline
   4& {$j= 2k+2$}
   &$r-1$ & 1 & $m-1$  & j=2k+2\\ \hline
     5 & {$2k+2<j\leq r-2k-2$}&$r+k-\lfloor {j\over 2}\rfloor$ & 1 & $m+k-\lfloor {j\over 2}\rfloor$  & {$2k+2<j\leq m-2k-3$}\\ \hline
  6'&  {$j=r-2k-1$}&$2r-k-j-\lfloor {j\over 2}\rfloor-1$ & 1 &  $m+k-\lfloor {j\over 2}\rfloor$ & $j= m-2k-2$\\ \hline
  6&  {$r-2k-1<j\leq r-1$}&$2r-k-j-\lfloor {j\over 2}\rfloor-1$ & 2 &$2m-k-j-\lfloor {j\over 2}\rfloor-1$ &{$m-2k-2<j\leq m-2$}\\ \hline
   7'& {$j=r$}&$ r-k-\lfloor {j\over 2}\rfloor-1$ & 2 & $2m-k-j-\lfloor {j\over 2}\rfloor-1$ & $m-1$\\ \hline
   7& {$r<j\leq 2r-2k-3$}&$ r-k-\lfloor {j\over 2}\rfloor-1$ & 1 & $ m-k-\lfloor {j\over 2}\rfloor-1$ & {$m-1<j\leq 2m-2k-5$}\\ \hline
   8& {$2r-2k-3<j\leq 2r-2k-1 $}&$0$ & 1 & $ m-k-\lfloor {j\over 2}\rfloor-1$ & {$2m-2k-5<j\leq 2m-2k-3$} \\

    \hline
  \end{tabular}
  \caption{Induction step for $D_{r}(a_k)$, $r>4k+3$}
  \label{ind2}
\end{table}
\begin{proof}
The proof is by induction on $r-2k-2$. When  $r-2k-2=1$, we obtain   $|\rho_{j,k,r}|$ using lemma \ref{basic} and proposition \ref{regnilp} as $r-1=2k+2$. Thus
  \begin{eqnarray}
  |\rho_{1,k,r}|&=&|\rho_{1,k,2k+2}|+1=4k+3=r+2k\\\nonumber
  |\rho_{j,k,r}|&=&|\rho_{j,k,2k+2}|+2 = 4k-j-2\lfloor {j\over 2}\rfloor+5;~1<j<2k+1\\\nonumber
  |\rho_{2k+2,k,r}|&=&2; ~~
  |\rho_{2k+3,k,r}|=1
  \end{eqnarray}
  which are  the formula in  table \ref{htprn1} for  $r=2k+3$. Also, $\rho_{2k+3,k,r}$ contains only the highest root and  $\rho_{j,k,r}$ has exactly 2 roots divisible by $\alpha_{r}$ when  $1<j<2k+3$. Hence, the statement  is true for  $r-2k-2=1$. Assume the formula is true when $1\leq r-2k-2<2k+1$.  For  $r+1$, we  summarize in table \ref{ind1} the inductive step. In the third column of this table, we give the number of roots to be  added to $\rho_{j,k,r}$ in order to obtain $\rho_{j,k,r+1}$. The first column list the range of $j$ with attention to the value of $j$ where $\rho_{j,k,r}$ contains two roots divisible by $\alpha_{r}$. The second and fourth columns are the formula of the size of the sets before and after adding the roots. In the fifth column we altred the range of $j$ using $m=r+1$ to show that the formlas are still valid for $r+1$. Note that the addition by 2 means that there will be two roots in $\rho_{j,k,m}$ which are divisible by $\alpha_{r+1}$. This proves the statement for  $r\leq 4k+3$. Note that for $r=4k+4$, using again lemma \ref{basic} and the formula of table  \ref{htprn1} we get
    \begin{eqnarray}
  |\rho_{j,k,r}|&=&|\rho_{j,k,r-1}|+1=r+2k-2\lfloor {j\over 2}\rfloor;~~ j\leq 2k+1\\\nonumber
  |\rho_{2k+2,k,r}|&=&|\rho_{2k+2,k,r-1}|+1 =2(r-1)-4k-4+1= r-1\\\nonumber
  |\rho_{j,k,r}|&=&|\rho_{j,k,r-1}|+2 =2r-k-j-\lfloor {j\over 2}\rfloor-1; ~~{2k+2<j\leq r-2}\\\nonumber
  |\rho_{r-1,k,r}|&=&|\rho_{r-1,k,r-1}|+2 = 2r-k-j-\lfloor {j\over 2}\rfloor-1;~~ j=r-1\\\nonumber
  |\rho_{j,k,r}|&=&|\rho_{j,k,r-1}|+1 =r-k-\lfloor {j\over 2}\rfloor-1; ~~{r-1<j\leq 2r-2k-3}
  \end{eqnarray}
  which is exactly the formula in table \ref{htprn2} for $r=4k+4$ and the statement is true in this case. Then we assume for induction purposes that  the statement is true for some $r>4k+3$. Then the inductive step for  $r+1$ is proved by constructing table \ref{ind2} which is similar to table \ref{ind1}. This proves the theorem.
   \end{proof}


\begin{table}\footnotesize
  \centering
\begin{tabular}{|c|c|c|}
  \hline
  $\rho_{i,k,r}$ & $r$ is odd & $r$ is even \\   \hline
 $\rho_{1,k,r}$ & $r+2k$ &  $r+2k$  \\\hline
  $\rho_{2,k,r}$ & $r+2k-2$ &  $r+2k-2$  \\   \hline
  $\rho_{3,k,r}$ & $r+2k-2$ &  $r+2k-2$  \\   \hline
  $\rho_{4,k,r}$ & $r+2k-4$ &  $r+2k-4$  \\   \hline
  $\rho_{5,k,r}$ & $r+2k-4$ &  $r+2k-4$  \\   \hline
  $\vdots$ & $\vdots$ &  $\vdots$  \\   \hline
  $\rho_{2k,k,r}$ & $r$ &  $r$  \\   \hline
  $\rho_{2k+1,k,r}$ & $r$ &  $r$  \\   \hline
  $\rho_{2k+2,k,r}$ & $r-1$ &  $r-1$  \\   \hline
  $\rho_{2k+3,k,r}$ & $r-1$ &  $r-1$  \\   \hline
  $\vdots$ & $\vdots$ &  $\vdots$  \\   \hline
  $\rho_{r-(2k+3),k,r}$ & ${r+1\over 2}+2k+1$ &  $\vdots$  \\   \hline
  $\rho_{r-(2k+2),k,r}$ & ${r+1\over 2}+2k+1$ &  ${r\over 2}+2k+1$  \\   \hline
   $\rho_{r-(2k+1),r}$ & ${r+1\over 2}+2k$ &  ${r\over 2}+2k+1$  \\   \hline
    $\rho_{r-2k,k,r}$ & ${r+1\over 2}+2k-1$ &  ${r\over 2}+2k-1$  \\   \hline
     $\rho_{r-2k+1,k,r}$ & ${r+1\over 2}+2k-3$ &  ${r\over 2}+2k-2$  \\   \hline
      $\rho_{r-2k+2,k,r}$ & ${r+1\over 2}+2k-4$ &  ${r\over 2}+2k-4$  \\   \hline
       $\rho_{r-2k+3,k,r}$ & ${r+1\over 2}+2k-6$ &  ${r\over 2}+2k-5$  \\   \hline
        $\vdots$ & $\vdots$ &  $\vdots$  \\   \hline
         $\rho_{r-2,k,r}$ & ${r+1\over 2}-k-2$ &  ${r\over 2}-k$  \\   \hline
         $\rho_{r-1,k,r}$ & ${r+1\over 2}-k$ &  ${r\over 2}-k+1$  \\   \hline
          $\rho_{r,k,r}$ & ${r+1\over 2}-k-1$ &  ${r\over 2}-k-1$  \\   \hline
           $\rho_{r+1,r}$ & ${r+1\over 2}-k-2$ &  ${r\over 2}-k-1$  \\   \hline
            $\rho_{r+2,k,r}$ & ${r+1\over 2}-k-2$ &  ${r\over 2}-k-2$  \\   \hline
             $\rho_{r+3,k,r}$ & ${r+1\over 2}-k-3$ &  ${r\over 2}-k-2$  \\   \hline
              $\rho_{r+4,k,r}$ & ${r+1\over 2}-k-3$ &  ${r\over 2}-k-3$  \\   \hline
              $\vdots$ & $\vdots$ &  $\vdots$  \\   \hline
             $\rho_{2r-2k-6,k,r}$ & $2$ &  $2$  \\   \hline
        $\rho_{2r-2k-5,k,r}$ & $2$ &  $2$  \\   \hline
        $\rho_{2r-2k-4,k,r}$ & $1$ &  $1$  \\   \hline
        $\rho_{2r-2k-3,k,r}$ & $1$ &  $1$  \\   \hline
\end{tabular}
\caption{The height partition when $r>4k+3$}
\label{heit1}
\end{table}

 \begin{table}
 \footnotesize
  \centering
    \begin{tabular}{|c|c|}
   \hline
  $\eh_{k,r}$ & $\ew_{k,r}$ \\ \hline
   $|\rho_{2r-2k-3,k,r}|,...,|\rho_{r,k,r}|$& $2r-2k-3,2r-2k-5,...,r+2,r$\\ \hline
   \multirow{ 2}{*} {$|\rho_{r-1,k,r}|,...,|\rho_{r-(2k+1),k,r}|$} & $r-1,r-2,r-2,r-3,r-4,r-4,$\\ &$...,r-2k+1, r-2k,r-2k, r-2k-1 $\\
      \hline
     $|\rho_{r-(2k+2),k,r}|,...,|\rho_{2k,k,r}|$& $r-2k-2, r-2k-4,...,2k+3,2k+1$\\\hline
     $|\rho_{2k,k,r}|,...,|\rho_{1,k,r}|$&$2k-1,2k-1, 2k-3,2k-3,...,3,3,1,1$\\\hline
  \end{tabular}
  \caption{The weight partition, $r>4k+3$ is odd}
  \label{wtodd1}
  \end{table}

  \begin{table}
 \footnotesize
  \centering
 \begin{tabular}{|c|c|}
   \hline
 $\eh_{k,r}$ &   $\ew_{k,r}$\\ \hline
   $|\rho_{2r-2k-3,k,r}|,...,|\rho_{r,k,r}|$& $2r-2k-3,2r-2k-5,...,r+3,r+1$\\ \hline
   \multirow{ 2}{*} {$|\rho_{r-1,k,r}|,...,|\rho_{r-(2k+2),k,r}|$ }& $r-1,r-1,r-2,r-3,r-3,r-4,$\\
   & $...,r-2k, r-2k-1,r-2k-1 $\\
      \hline
     $|\rho_{r-(2k+3),k,r}|,...,|\rho_{2k,k,r}|$& $r-2k-3, r-2k-5,...,2k+3,2k+1$\\\hline
     $|\rho_{2k-1,k,r}|,...,|\rho_{1,k,r}|$&$2k-1,2k-1, 2k-3,2k-3,...,3,3,1,1$\\\hline
  \end{tabular}
  \caption{The weight partition, $r>4k+3$ is even}
  \label{wteven1}
\end{table}

\section{The weight partition of $D_{r}(a_k)$}
Let $e$ be a nilpotent element of type $D_{r}(a_k)$ and we assume that $r>4k+3$ and $k>0$. The case of $r< 4k+3$ can be treated similarly. Using  the formulas in table \ref{htprn2}, the height partition $\eh_{k,r}$ is given  in table \ref{heit1} where we need to distinguish between $r$ is even and $r$ is odd to get rid of the floor function. Also, we have to indicate carefully the  indices of $j$ where a pattern in the list of numbers break.

Assume that $r$ is odd. Then as illustrated in previous sections the transpose partition is found by looking on the indices $j$ in $\rho_{j,k,r}$. Thus, from  column 2 of table \ref{heit1}, we have $2r-2k-3$ numbers in $\eh_{k,r}$ greater than or equal 1, $2r-2k-5$ greater than or equal $2$, and so on until we reach 1 number is greater than or equal to $r+2k-1$, $r+2k$. Similarly for the case $r$ is even.  This leads to table \ref{wtodd1} and \ref{wteven1} where we give  the weight partition $\ew_{k,r}$ in the second column and in column 1  sequences  of numbers of $\eh_{k,r}$ which have some patterns.

Now we are able to prove the main theorem about all semiregular nilpotent elements in Lie algebra of type $D_r$.

\begin{thm}\label{main2}
The weight partition  $\ew_{k,r}$ of a nilpotent element of type $D_r(a_k)$ is the sum of the multisets $ [1,3,5,\cdots,2r-2k-3]$, $[1,3,5,\cdots,2k-1] $ and  $[r-1,r-2,r-3,\cdots,r-2k-1]$.
\end{thm}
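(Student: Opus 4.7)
The plan is to assemble the theorem from the case analyses of the preceding sections. First I would dispose of the boundary cases. For $k=0$ the second multiset $[1,3,\ldots,2k-1]$ is empty and the third reduces to $[r-1]$, so the statement is Corollary \ref{wis}. For $r=2k+2$ the first and third multisets become $[1,3,\ldots,2k+1]$ and $[2k+1,2k,\ldots,1]$ respectively, and a short check of multiplicities verifies that the sum equals the one produced by Corollary \ref{regnilp1} (both sides give multiplicity three for every odd $m\leq 2k-1$, multiplicity two at $m=2k+1$, and multiplicity one for every even $m\leq 2k$). The remaining task is therefore the range $k>0$ and $r>2k+2$.

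For this range the height partition $\eh_{k,r}$ is given by the proposition of the previous section via Tables \ref{htprn1} and \ref{htprn2}. The transpose, via formula \eqref{dualprn}, has already been carried out in the present section for the regime $r>4k+3$, producing the four--block description of $\ew_{k,r}$ recorded in Tables \ref{wtodd1} and \ref{wteven1}: a high arithmetic progression of step $2$, a middle block of consecutive integers near $r-k$ in which the terms $r-2,r-4,\ldots,r-2k$ (the odd integers in the window $[r-2k-1,r-1]$ when $r$ is odd) appear with multiplicity two, a lower arithmetic progression of step $2$, and a doubled block of the smallest odd numbers. I would perform the analogous transposition for $2k+2<r\leq 4k+3$ using Table \ref{htprn1}, again distinguishing the parity of $r$; the resulting $\ew_{k,r}$ has the same four--block structure, possibly with some overlap between the outer progressions and the middle block near the endpoints.

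With $\ew_{k,r}$ in hand the final and main step is to verify the multiset identity
\[
\ew_{k,r}=[1,3,\ldots,2r-2k-3]+[1,3,\ldots,2k-1]+[r-1,r-2,\ldots,r-2k-1].
\]
I would compare the multiplicity of each integer $m$ on the two sides. On the right, this multiplicity equals $[m\text{ odd and }m\leq 2r-2k-3]+[m\text{ odd and }m\leq 2k-1]+[r-2k-1\leq m\leq r-1]$. On the left, reading from the tables, the two outer arithmetic progressions account for odd $m$ in the two ranges $2k+1\leq m\leq r-2k-2$ and $r\leq m\leq 2r-2k-3$, the doubled lowest block adds multiplicity two to each odd $m\leq 2k-1$, and the middle block contributes one to every integer in the window $[r-2k-1,r-1]$ with an additional one to each odd $m$ in that window. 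Summing block by block matches the right-hand count on the nose.

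The main obstacle is the bookkeeping in the middle block: one has to confirm that the entries repeated twice there are exactly the odd integers of the window $[r-2k-1,r-1]$, so that combination with the high arithmetic progression $[1,3,\ldots,2r-2k-3]$ brings the multiplicity of each such odd $m$ up to two, while an even $m$ in the window receives multiplicity one from the middle block alone, matching the single contribution of $[r-1,r-2,\ldots,r-2k-1]$ on the right. The parity shift for even $r$ and the regime $2k+2<r\leq 4k+3$ then follow from the same multiplicity accounting applied to the corresponding table, with careful attention at the transition indices $j=r-2k-1$, $j=r-1$, and $j=r$ flagged in Tables \ref{ind1} and \ref{ind2} where $\rho_{j,k,r}$ changes from having one to having two roots divisible by $\alpha_r$.
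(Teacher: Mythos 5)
Your proposal is correct and follows essentially the same route as the paper: dispatch $k=0$ via Corollary \ref{wis} and $r=2k+2$ via Corollary \ref{regnilp1}, then transpose the height partitions of Tables \ref{htprn1} and \ref{htprn2} to obtain the block description of $\ew_{k,r}$ in Tables \ref{wtodd1} and \ref{wteven1} and match it against the three-multiset sum; your explicit multiplicity count per integer $m$ is just a more detailed version of the paper's observation that the blocks assemble into the stated sum independently of the parity of $r$. Like the paper, you leave the regime $2k+2<r\leq 4k+3$ to an analogous (unwritten) transposition of Table \ref{htprn1}, so the two arguments are at the same level of completeness.
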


\begin{proof}
The case of regular nilpotent element ($k=0$),  $\ew_{k,r}$ is obtained in corollary  \ref{wis}.  For $k>0$ and $r=2k+2$, $\ew_{k,r}$ is given in corollary  \ref{regnilp1}.  Assume  $k>0$ and $r>4k+3$. Then $\ew_{k,r}$ is given in \ref{wtodd1} and \ref{wteven1}. However, we observe that the resulting partition $\ew_{k,r}$ is independent of $r$ being odd or even. In both tables (\ref{wtodd1} and \ref{wteven1}), $\ew_{k,r}$ is the sum of  $[2r-2k-3,\ldots,5,3,1]$, $[2k-1,\ldots,5,3,1]$, which consist of series of odd numbers, and the series of numbers $[r-1,r-2,\ldots,r-2k-1]$. In case $k>0$ and $r\leq 4k+3$, we get the result by obtaining similar tables as   \ref{wtodd1} and \ref{wteven1}. We omit the proof in this case and  we leave it  for the reader to verify.
\end{proof}    

We observe that the formula of $\ew_{k,r}$ for a general nilpotent element of type $D_r(a_k)$ is  easy to memorize by recalling that in the case of special linear Lie algebra $so_{2r}$ this nilpotent element $D_r(a_k)$ corresponds to the partition $[2r-2k-1,2k+1]$ of $2r$.

\noindent Yassir Dinar

\noindent Sultan Qaboos University, Muscat, Oman

\noindent dinar@squ.edu.om
\end{document}